\documentclass[11pt]{amsproc}
\usepackage[T1]{fontenc}
\usepackage{lmodern}
\usepackage[margin=1.12in]{geometry}
\usepackage{amssymb,mathtools,microtype}
\usepackage[dvipsnames]{xcolor}
\usepackage{enumitem}
\usepackage{hyperref}
\hypersetup{colorlinks=true,linkcolor=MidnightBlue,citecolor=MidnightBlue,
  urlcolor=MidnightBlue,pdftitle={Bergman's amalgamation problem over the circle},
  pdfauthor={Andreas Thom},pdfsubject={Hermitian positivity, unitary characters, and compact amalgamation}}
\numberwithin{equation}{section}
\newtheorem{theorem}{Theorem}[section]
\newtheorem*{mainproposition}{Proposition A}
\newtheorem*{maintheorem}{Theorem B}
\newtheorem*{positivityproposition}{Proposition C}
\newtheorem{proposition}[theorem]{Proposition}
\newtheorem{lemma}[theorem]{Lemma}

\theoremstyle{definition}

\theoremstyle{remark}
\newtheorem{remark}[theorem]{Remark}
\newcommand{\Z}{\mathbb Z}

\newcommand{\C}{\mathbb C}

\newcommand{\T}{S^1}
\newcommand{\U}{\mathrm U}
\newcommand{\SU}{\mathrm{SU}}
\newcommand{\SO}{\mathrm{SO}}

\DeclareMathOperator{\diag}{diag}

\DeclareMathOperator{\tr}{tr}

\allowdisplaybreaks[1]
\title[Bergman's amalgamation problem over the circle]{Bergman's amalgamation problem over the circle}
\author{Andreas Thom}
\address{A.~Thom, Institut f\"ur Geometrie, Fakult\"at Mathematik,
TU Dresden, 01062 Dresden, Germany}
\email{andreas.thom@tu-dresden.de}
\date{11 September 2026}
\subjclass[2020]{Primary 22C05; Secondary 22E46, 32L05.}
\keywords{Compact amalgamation, circle subgroup, unitary representation, Hermitian Positivstellensatz, Grassmannian, Schur positivity}

\begin{document}
\begin{abstract}
We prove that $S^1$, $\SU(2)$, and $\SO(3)$ are compact amalgamation bases:
for each of these groups, any two compact groups with specified
embedded copies of it embed in a compact group in which those copies agree.
For compact Lie groups, the amalgam can be chosen to be a finite-dimensional
unitary group.

This research was conducted with substantial AI assistance, including in
the development of mathematical constructions and proof arguments.
\end{abstract}
\maketitle
\tableofcontents

\section{Introduction}\label{sec:intro}

The compact amalgamation problem asks whether two compact groups with a
specified common subgroup can be embedded in a single compact group so
that the two embeddings agree on that subgroup. We answer this question in
the affirmative when the common subgroup is the circle $S^1$, $\SU(2)$ and $\SO(3)$, thereby
settling several cases of G.~M. Bergman's question
\cite[Question~20]{Bergman}.

The crucial ingredient in the proof is the Hermitian Positivstellensatz of D.~W. Catlin and
J.~P. D'Angelo \cite{CD}, recalled in Section~\ref{sec:CD}. Under the
positivity hypotheses stated there, all sufficiently high powers of a
Hermitian algebraic kernel are sums of Hermitian squares. Applying this
theorem to kernels on a Grassmannian proves the required character-positivity
statement. These characters are then used
to construct faithful representations of the ambient groups with matching
restrictions to the circle.

Throughout, compact groups are understood to be Hausdorff, and we write
$\T=\{z\in\C:|z|=1\}$ for the circle group.
Let $H,A,B$ be compact groups and let
$\iota_A:H\hookrightarrow A$ and $\iota_B:H\hookrightarrow B$ be continuous
embeddings. A \emph{compact amalgam} for these embeddings
is a compact Hausdorff group $K$ together with continuous embeddings
$j_A:A\hookrightarrow K$ and $j_B:B\hookrightarrow K$ such that
$ j_A\circ\iota_A=j_B\circ\iota_B.$
Thus the two groups must embed faithfully, but their images may intersect
in more than the common subgroup $H$. A compact group $A$ is a
\emph{compact amalgamation base} if every pair
of embeddings of it into compact groups as above admits a compact amalgam.

\medskip

Our main results are the following.

\begin{mainproposition}\label{prop:lie}
Let $A,B$ be compact Lie groups, let $H\in\{\T,\SU(2),\SO(3)\}$, and let
$\iota_A:H\hookrightarrow A$ and $\iota_B:H\hookrightarrow B$ be continuous
embeddings. Then there exist an integer $d\ge1$ and faithful continuous
unitary representations $\pi:A\longrightarrow\U(d)$ and
$\tau:B\longrightarrow\U(d)$ such that
$\pi\circ\iota_A=\tau\circ\iota_B$. Thus the compact amalgam can be
chosen to be the finite-dimensional unitary group $\U(d)$.
\end{mainproposition}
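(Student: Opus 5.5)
We reduce the statement to a problem about characters of $H$. Since $A$ and $B$ are compact Lie groups, each has a faithful finite-dimensional unitary representation, say $\rho_A$ and $\rho_B$. Direct sums of faithful representations with arbitrary representations remain faithful. It therefore suffices to find finite-dimensional unitary representations $\sigma_A$ of $A$ and $\sigma_B$ of $B$ such that
\[
(\rho_A\oplus\sigma_A)\circ\iota_A\;\cong\;(\rho_B\oplus\sigma_B)\circ\iota_B
\]
as representations of $H$. The two resulting representations then have equal characters on $H$, so they are equivalent. After conjugating one of them by a unitary intertwiner we obtain $\pi\circ\iota_A=\tau\circ\iota_B$ exactly, with $d$ the common dimension. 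Finite-dimensional representations of $H$ are determined by their characters, so the task becomes: show that the virtual character $\chi_{\rho_B}|_H-\chi_{\rho_A}|_H$ can be written as $\chi_{\sigma_A}|_H-\chi_{\sigma_B}|_H$ with $\sigma_A,\sigma_B$ genuine representations. Equivalently, we need
\[
\chi_{\rho_A}|_H+\chi_{\sigma_A}|_H=\chi_{\rho_B}|_H+\chi_{\sigma_B}|_H .
\]

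The key lemma to prove is the following. For a compact Lie group $G\supseteq H$ with $H\in\{\T,\SU(2),\SO(3)\}$, and for every irreducible representation $\lambda$ of $H$, there is a representation of $G$ whose restriction to $H$ contains $\lambda$ with large multiplicity and otherwise only controlled summands. A more symmetric and more useful version: there is a representation $\theta$ of $G$ such that $\theta|_H\cong N\cdot\mathbf 1\oplus(\text{anything})$ in a way that lets us absorb arbitrary finite discrepancies. My concrete plan is to prove a \emph{universality} statement. For each $G$ there is a representation $\theta_G$ of $G$, and some $m$, such that every irreducible of $H$ occurs in $\theta_G|_H^{\otimes m}$ (or in $\theta_G|_H$ itself), and moreover $\theta_G|_H$ is a \emph{positive} combination that can be matched.

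The cleanest route I would try is to embed everything in a unitary group. Take $G\subseteq\U(n)$ via $\rho_G$. Restricted to $H$, the representation $\rho_G$ is a representation $\mu$ of $H$, so $H$ sits in $\U(n)$ through $\mu$. If I can show that, for some representation $\Sigma$ of $\U(n)$ (hence of $G$), the restriction $\Sigma\circ\mu$ is a "universal" $H$-representation depending only on $\dim$ data, then the matching follows. Here the Catlin–D'Angelo Positivstellensatz enters, as the introduction indicates. For $H=\T$, restrictions of $\U(n)$-representations (Schur polynomials $s_\lambda(z^{a_1},\dots,z^{a_n})$) give Laurent polynomials in $z$. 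I need to show that suitable combinations have nonnegative coefficients and can realize any target Laurent polynomial with nonnegative coefficients, up to adding a common term. Concretely: a Laurent polynomial $f(z)$ that is positive on $\T$ should, after multiplication by high powers of a positive kernel coming from $\U(n)$-characters, become a sum of restricted characters. Applying the Hermitian Positivstellensatz to kernels on a Grassmannian yields exactly this character-positivity. For $\SU(2)$ and $\SO(3)$ I would reduce to the maximal torus $\T$ using Weyl-invariance, treating $\SO(3)$ via its image in $\SU(2)$-characters with even weights.

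The main obstacle is this positivity step: showing that the discrepancy virtual character becomes an honest character after adding restrictions of genuine $A$- and $B$-representations. I would first try tensoring. I would replace $\rho_A$ and $\rho_B$ by $\rho_A\otimes\overline{\rho_A}\otimes\cdots$, or by high symmetric/exterior powers, arranged so that both restrictions to $H$ contain every weight with huge multiplicity. I would then compare multiplicities asymptotically, and the Positivstellensatz would guarantee that the ratio becomes a positive combination for large powers.
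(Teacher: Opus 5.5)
Your opening reduction (equal restricted characters, then conjugation by a unitary intertwiner) is correct and is how the paper finishes. But it only restates the problem. Producing faithful $\alpha$ of $A$ and $\beta$ of $B$ with $\alpha\circ\iota_A\cong\beta\circ\iota_B$ is the whole content, and writing $\alpha=\rho_A\oplus\sigma_A$ does not make it easier. The step that actually produces a common restricted character is missing, and the mechanisms you sketch do not supply it:
\begin{itemize}
\item A representation $\Sigma$ of $\U(n)$ whose pullback along $\mu$ depends only on dimension data does not exist unless $\Sigma$ is a multiple of the trivial representation. The weights of $\Sigma\circ\mu$ are the pairings of the weights of $\Sigma$ with $(a_1,\dots,a_n)$, so they move with the weights of $\mu$.
\item Write $P_A=\chi_{\theta_A}|_H$ and $P_B=\chi_{\theta_B}|_H$. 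Comparing $\theta_A^{\otimes N}|_H$ and $\theta_B^{\otimes N}|_H$ asymptotically can at best show that $P_A^N$ and $P_B^N$ have similar shapes, whereas you need an exact identity of Laurent polynomials. The residual $P_B^N-P_A^N$ must again be written as $\chi_{\sigma_A}|_H-\chi_{\sigma_B}|_H$, which is the original problem. This additive, asymptotic route is close to the one behind the partial results in Israel's thesis.
\item ``The Positivstellensatz would guarantee that the ratio becomes a positive combination'' names no kernel, no manifold, and no hypotheses to check.
\end{itemize}

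The missing idea is to take a target that depends symmetrically on both weight lists, and to realize it by composing with a representation of a unitary group rather than by adding summands. First arrange that the weight lists $a$ of $R_A:A\to\U(m)$ and $b$ of $R_B:B\to\U(\ell)$ both contain $1$. This is done by adding an irreducible summand whose restriction contains $z\mapsto z$, which exists by the restriction theorem for closed subgroups. The paper then uses the common character
\[
 \prod_{i=1}^m\prod_{j=1}^{\ell}(1+z^{a_ib_j})^n(1+z^{-a_ib_j})^n
 =\prod_{i=1}^m f_b(z^{a_i})^n=\prod_{j=1}^{\ell}f_a(z^{b_j})^n,
\]
where $f_c(z)=\prod_j(1+z^{c_j})(1+z^{-c_j})$. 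On the $A$-side this is the pullback along $R_A$ of the class function $\prod_i f_b(x_i)^n$ on $\U(m)$, and symmetrically on the $B$-side.

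What must be proved is that $\prod_i f_b(x_i)^n$ is the character of a faithful representation of $\U(m)$ for all large $n$. Up to the factor $\det^{-ns_b}$ with $s_b=\sum_j|b_j|$, this is Schur positivity of $\prod_i q(x_i)^n$ for $q(z)=\prod_j(1+z^{|b_j|})^2$. The paper obtains it as follows:
\begin{itemize}
\item It applies the Catlin--D'Angelo theorem to the kernel $\prod_j\det(I+(ZW^*)^{k_j})$, extended to a Hermitian algebraic kernel on $\mathcal O(\sum_jk_j)$ over $\operatorname{Gr}(m,2m)$. The factor with $k_j=1$ supplies strict Cauchy--Schwarz and strictly positive curvature.
\item Restricting the resulting sum of Hermitian squares to graph planes of unitary matrices makes $U\mapsto\det q(U)^n$ a central function of positive type. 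Its Schur coefficients are therefore nonnegative, and they are integers because Schur polynomials form a $\Z$-basis.
\item The same factor $1+z$ gives faithfulness on all of $\U(m)$, through the bound $\prod_{i,j}|1+x_i^{b_j}|^{2n}\le4^{nm\ell}$.
\end{itemize}

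For $\SU(2)$ and $\SO(3)$ no separate analysis of even weights is needed. Apply the circle case to a maximal torus $T$. The two resulting representations of $H$ agree on $T$, and every element of $H$ is conjugate into $T$, so they are equivalent; one then conjugates.
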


This proposition can then be used to deduce our main result:

\begin{maintheorem}\label{thm:main}
The groups $S^1$, $\SU(2)$, and $\SO(3)$ are compact amalgamation bases.
\end{maintheorem}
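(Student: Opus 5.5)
The plan is to reduce Theorem B to Proposition A by approximating arbitrary compact groups by compact Lie quotients, and then taking a product of the unitary groups supplied by Proposition A. Fix $H\in\{\T,\SU(2),\SO(3)\}$ and embeddings $\iota_A:H\hookrightarrow A$, $\iota_B:H\hookrightarrow B$ into arbitrary compact groups. Let $\mathcal N_A$ be the set of closed normal subgroups $N\trianglelefteq A$ with $A/N$ a Lie group. By the Peter--Weyl theorem, $\mathcal N_A$ is closed under finite intersections and $\bigcap\mathcal N_A=\{1\}$. Define $\mathcal N_B$ in the same way.

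\textbf{Step 1: quotients in which $H$ still embeds.} I claim there is $N_0\in\mathcal N_A$ with $N_0\cap\iota_A(H)=\{1\}$. Since $H$ is a Lie group, it has no small subgroups: choose a neighbourhood $U$ of $1$ in $H$ containing no nontrivial subgroup. The set $\iota_A(H\setminus U)$ is compact and disjoint from $\bigcap\mathcal N_A$. Hence finitely many complements $A\setminus N_i$ cover it. Put $N_0=\bigcap_i N_i\in\mathcal N_A$. Then $N_0\cap\iota_A(H)$ is a subgroup contained in $\iota_A(U)$, so it is trivial.

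Let $\mathcal N_A'=\{N\in\mathcal N_A: N\subseteq N_0\}$. Every $N\in\mathcal N_A'$ satisfies $N\cap\iota_A(H)=\{1\}$, so $\iota_A$ induces an embedding $\bar\iota_A^N:H\hookrightarrow A/N$. Moreover $\mathcal N_A'$ still has trivial intersection, since $N\cap N_0\in\mathcal N_A'$ for every $N\in\mathcal N_A$. Do the same for $B$ to obtain $\mathcal N_B'$. I expect this no-small-subgroups step to be the only point needing care. It is exactly where it matters that $H$ is Lie while $A$ and $B$ are arbitrary.

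\textbf{Step 2: apply Proposition A and take a product.} For each pair $(N,M)\in\mathcal N_A'\times\mathcal N_B'$, apply Proposition A to the compact Lie groups $A/N$ and $B/M$ with embeddings $\bar\iota_A^N$ and $\bar\iota_B^M$. This gives faithful representations $\pi_{N,M}:A/N\to\U(d_{N,M})$ and $\tau_{N,M}:B/M\to\U(d_{N,M})$ with $\pi_{N,M}\circ\bar\iota_A^N=\tau_{N,M}\circ\bar\iota_B^M$.

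Set $K=\prod_{(N,M)}\U(d_{N,M})$, which is compact Hausdorff by Tychonoff. Define $j_A(a)=(\pi_{N,M}(aN))_{(N,M)}$ and $j_B(b)=(\tau_{N,M}(bM))_{(N,M)}$. These maps are continuous homomorphisms, and $j_A\circ\iota_A=j_B\circ\iota_B$ holds coordinatewise.

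\textbf{Step 3: injectivity.} If $a\neq1$, pick $N\in\mathcal N_A'$ with $a\notin N$. Then for any $M$ the coordinate $\pi_{N,M}(aN)$ is nontrivial, because $\pi_{N,M}$ is faithful. The argument for $j_B$ is the same. A continuous injective homomorphism from a compact group into a Hausdorff group is a topological embedding. Thus $(K,j_A,j_B)$ is a compact amalgam, which proves Theorem B.
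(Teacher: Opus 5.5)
Your proof is correct. For the circle it follows the paper's route, but it writes out a step the paper only cites. The paper does not prove the passage from compact Lie groups to arbitrary compact groups. It invokes Bergman's equivalence (a compact Lie group is a compact amalgamation base iff it is one among compact Lie groups), and in the introduction it sketches that reduction as ``Peter--Weyl, finitely many representations detect the circle faithfully, apply the Lie result to image groups and take a product.'' Your Steps 1--3 are a complete proof of that reduction, and your no-small-subgroups compactness argument is exactly the ``finitely many'' part.

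The real difference is the order of steps for $\SU(2)$ and $\SO(3)$. The paper first builds a compact amalgam $K$ over the maximal torus $T\cong\T$ for the arbitrary compact groups $A,B$. It then takes a Peter--Weyl family $(\rho_\alpha)$ of $K$ and conjugates each coordinate, using that two representations of $H$ agreeing on $T$ have equal characters and are therefore unitarily equivalent. You instead apply Proposition A for $H=\SU(2),\SO(3)$ directly and run the same Lie-quotient reduction uniformly for all three $H$. This is not circular: the paper derives Proposition A for $\SU(2)$ and $\SO(3)$ from the circle case of Proposition A alone, by the same character argument applied to the single representation $K=\U(D)$.

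Your ordering gives a self-contained reduction that works for any compact Lie $H$ once the Lie case is known. The paper's ordering avoids reproving Bergman's reduction and performs the torus-to-$H$ upgrade directly for arbitrary compact groups.

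One minor attribution slip: closure of $\mathcal N_A$ under finite intersections does not come from Peter--Weyl. It comes from the closed embedding $A/(N_1\cap N_2)\hookrightarrow A/N_1\times A/N_2$ together with Cartan's closed-subgroup theorem. Peter--Weyl (again with Cartan) is what gives $\bigcap\mathcal N_A=\{1\}$.
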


For the circle case of \hyperref[prop:lie]{Proposition A}, let $A$ and $B$
be compact Lie groups with specified circle embeddings.
A finite-dimensional unitary representation of the circle is
determined by its integer weights: its character is a sum of monomials
$z^a$, counted with multiplicity. We therefore seek faithful
representations of $A$ and $B$ with the same restricted weight
multiplicities. The difficulty is to realize a common circle character as
the restriction of a genuine representation of each ambient group, rather
than merely as a formal difference of characters. This strategy of matching
restricted characters by composing unitary representations was already
proposed by G.~M. Bergman in the discussion following
\cite[Question~20, pp.~273--274]{Bergman}.

\medskip

Our construction starts with faithful representations
$A\to\U(m)$ and $B\to\U(\ell)$, with circle weight lists
$a_1,\ldots,a_m$ and $b_1,\ldots,b_\ell$. The restriction theorem for
representations of closed subgroups allows us to arrange that each list
contains the weight $1$; see Lemma~\ref{lem:weight-one}.
We then use the Laurent polynomial
\begin{equation}\label{eq:intro-common}
 \prod_{i=1}^m\prod_{j=1}^{\ell}
 (1+z^{a_i b_j})^n(1+z^{-a_i b_j})^n.
\end{equation}
This expression is unchanged when the two weight lists are interchanged.
For the first ambient group, it is the restriction of the symmetric Laurent
polynomial
\[
 \prod_{i=1}^m f_b(x_i)^n,\qquad
 f_b(z)=\prod_{j=1}^{\ell}(1+z^{b_j})(1+z^{-b_j});
\]
interchanging $a$ and $b$ gives the candidate for the second group. It
remains to show that these candidates are characters of faithful
representations of $\U(m)$ and $\U(\ell)$ for a common exponent $n$.

The relevant character-positivity statement is the following consequence
of the Catlin--D'Angelo theorem, specialized to the products of binomials
that occur in the construction.

Recall that Schur polynomials are the characters of irreducible polynomial
representations of $\U(m)$, expressed in the eigenvalues of a unitary
matrix. Thus nonnegative integer coefficients in the Schur basis give
the multiplicities in a genuine representation.

The main technical ingredient is therefore to establish nonnegative integer coefficients in the Schur basis, which is the content of the following Proposition:

\begin{positivityproposition}\label{prop:schur-positive}
Let
\[
 q(z)=\prod_{j=1}^s(1+z^{k_j}),\qquad k_j\in\Z_{\ge0},
\]
where at least one $k_j$ equals $1$. For every integer $m\ge1$ there
is $n_0=n_0(q,m)$ such that, for every integer $n\ge n_0$, the symmetric
polynomial
\[
 \prod_{i=1}^m q(x_i)^n
\]
has nonnegative integer coefficients in the Schur basis in $m$ variables.
Thus it is the character of a finite-dimensional polynomial
representation of $\U(m)$.
\end{positivityproposition}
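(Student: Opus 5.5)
The plan is to reduce Schur positivity to Hermitian positivity of an algebraic kernel on a Grassmannian, and then apply the Catlin--D'Angelo theorem of Section~\ref{sec:CD}. Integrality is free: $P_n:=\prod_i q(x_i)^n$ is a symmetric polynomial with integer coefficients, and the Schur polynomials form a $\Z$-basis of the symmetric polynomials in $m$ variables. So only nonnegativity of the coefficients $c_\lambda$ in $P_n=\sum_\lambda c_\lambda s_\lambda$ needs proof.

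\emph{Step 1: Schur positivity as Hermitian sums of squares.} Fix $N\ge m$ and, for $Z,W\in M_{m\times N}(\C)$, consider the kernel $K(Z,W)=P(ZW^*)$, meaning $P$ evaluated on the eigenvalues of the $m\times m$ matrix $ZW^*$. This is a polynomial in the entries of $Z$ and $\overline W$. Let $\rho_\lambda$ be the Schur functor, so that $\rho_\lambda(Z):S_\lambda(\C^N)\to S_\lambda(\C^m)$. Functoriality gives
\[
 s_\lambda(ZW^*)=\tr\rho_\lambda(Z)\rho_\lambda(W)^*=\sum_{a,b}\rho_\lambda(Z)_{ab}\overline{\rho_\lambda(W)_{ab}},
\]
which is a Hermitian square. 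For $N\ge m$ the entries of the $\rho_\lambda(Z)$ for distinct $\lambda$ span linearly independent isotypic subspaces of $\C[Z]$ under $\U(m)\times\U(N)$. Hence $\sum_\lambda c_\lambda s_\lambda(ZW^*)$ is a Hermitian sum of squares if and only if every $c_\lambda\ge0$. So it suffices to show that $P_n(ZW^*)$ is a Hermitian sum of squares for all large $n$.

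\emph{Step 2: the kernel.} Since $\prod_i(1+x_i^{k})=\det(I+X^k)$, we get
\[
 P_n(ZW^*)=\prod_{j=1}^s\det\bigl(I+(ZW^*)^{k_j}\bigr)^n .
\]
The factor with $k_j=1$ is $D(Z,W):=\det(I+ZW^*)$. By Cauchy--Binet, $D(Z,W)=\langle p(Z),p(W)\rangle$, where $p(Z)$ is the vector of all minors of $Z$ (the empty minor equals $1$). These are exactly the Plücker coordinates of the row space of $[\,I\;Z\,]$ in the Grassmannian $G(m,m+N)$. Thus $D$ is the standard positive-definite Hermitian form on sections of $\mathcal O(1)$, and the whole kernel descends to a Hermitian form on sections of $\mathcal O(e)$ for suitable $e$.

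On the diagonal $Z=W$, the matrix $ZZ^*$ is positive semidefinite. Hence each factor satisfies $\det(I+(ZZ^*)^{k_j})\ge1$, so $P_1(ZZ^*)$ is strictly positive. After dividing by the appropriate power of $D(Z,Z)$, it becomes a strictly positive function on the compact Grassmannian, including at infinity.

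\emph{Step 3: apply Catlin--D'Angelo.} Write $P_n=D^n G^n$, where $G$ collects the remaining factors. The Positivstellensatz, in its form for Hermitian kernels on a projective variety with the full Plücker form $D$ as the strictly positive "norm" factor, gives that high powers of the strictly positive kernel are sums of Hermitian squares. In particular, $D^n G^n$ is a sum of Hermitian squares for all $n\ge n_0(q,m)$. By Step~1 this yields $c_\lambda\ge0$.

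The main obstacle is Step~3: matching exactly the hypotheses of the theorem recalled in Section~\ref{sec:CD}. The kernel must be bihomogenized correctly on $G(m,m+N)$. One must also check that strict positivity holds uniformly, including on the boundary at infinity, where only top-degree terms survive; here the hypothesis that some $k_j=1$ is essential. Finally, the theorem must apply to the $n$-th power itself, rather than only after multiplication by an auxiliary factor. I would first try to rewrite $P_n$ as $D^{n}\cdot G^n$ with $G/D^{c}$ strictly positive on the Grassmannian, and invoke the theorem with $D$ as the multiplier.
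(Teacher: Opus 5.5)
Your Step~1 is fine. It is a legitimate variant of the paper's last step: the paper restricts $K^n$ to graph planes of unitary matrices and reads off $c_\lambda\ge0$ from Fourier positivity of a central function of positive type, while you use linear independence of the matrix coefficients of $\rho_\lambda(Z)$ for distinct $\lambda$. The genuine gap is Step~3.

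The Catlin--D'Angelo theorem recalled in Section~\ref{sec:CD} does not take positivity on the diagonal as its hypothesis. It requires the strict global Cauchy--Schwarz inequality \eqref{eq:strict-cs} and $\mathrm i\partial\bar\partial\log K(x,\overline x)>0$. Diagonal positivity cannot suffice. If $|K(x,\overline y)|^2>K(x,\overline x)K(y,\overline y)$ for one pair of points, then the $2\times2$ matrix of $K^n$ at $x,y$ has negative determinant for every $n$, so no power of $K$ is a Hermitian sum of squares.

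The multiplier form of the Positivstellensatz ($D^NG$ a sum of squares for $N\ge N_0(G)$, as in Quillen's theorem) does not rescue this. You need $D^nG^n$, and whenever $DG$ violates Cauchy--Schwarz one has $N_0(G^n)>n$ for every $n$. Your proposed fix, arranging that $G/D^c$ is strictly positive, is again only a diagonal condition, so the obstacle you flag is not addressed.

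The missing idea is the weak global Cauchy--Schwarz inequality for each factor,
\[
 |\det(I+(ZW^*)^k)|^2\le\det(I+(ZZ^*)^k)\,\det(I+(WW^*)^k).
\]
The paper derives it from H\"older's inequality for Schatten $2k$-norms applied to $\mathcal E(Z)=\bigoplus_r\bigwedge^rZ$, using $\tr\mathcal E(Z)=\det(I+Z)$. To pass from the graph chart to all of $\operatorname{Gr}(m,2m)$, the paper realizes the factor globally as $K_k=\prod_{\alpha^k=(-1)^{k+1}}\det(AC^*+\alpha BD^*)$, a Hermitian algebraic kernel on $\mathcal O(k)$. This construction is also what justifies your unproved assertion that the kernel descends to $\mathcal O(e)$; the inequality then extends from the chart by continuity.

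Weak Cauchy--Schwarz gives semipositive curvature. The Pl\"ucker factor $D=K_1$ has strict Cauchy--Schwarz and Fubini--Study curvature. So the product satisfies both strict hypotheses, and Theorem~\ref{thm:CD} applies to $K^n$ itself, with no multiplier.

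This also corrects your account of where $k_j=1$ enters. Positivity at infinity holds for every $K_k$ separately: normalizing $AA^*+BB^*=I$ gives $\det(P^k+Q^k)>0$. What fails without a factor $1+z$ is strictness. For $q=1+z^2$, the kernel has equality in Cauchy--Schwarz at the distinct planes given by $Z$ and $-Z$.
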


To apply \hyperref[prop:schur-positive]{Proposition C} to $f_b$, set
$s_b=\sum_j|b_j|$. The polynomial
\[
 q_b(z)=z^{s_b}f_b(z)=\prod_{j=1}^{\ell}(1+z^{|b_j|})^2
\]
has nonnegative integer coefficients and a positive constant coefficient.
Since the list $b$ contains $1$, the product contains a factor $1+z$. Thus
\hyperref[prop:schur-positive]{Proposition C} makes $\prod_i q_b(x_i)^n$ a polynomial
character for every sufficiently large $n$. Tensoring with $\det^{-ns_b}$
gives the required Laurent character $\prod_i f_b(x_i)^n$.
The factor associated with weight $1$ also ensures faithfulness, as shown
in Proposition~\ref{prop:character}. Interchanging the lists completes
the construction once $n$ is chosen beyond both positivity thresholds.
Composing these representations with the initial embeddings of $A$ and $B$
gives faithful representations whose restricted circle characters are both
\eqref{eq:intro-common}. The restrictions are therefore unitarily equivalent,
and conjugating one representation makes them equal. This proves the
circle case of \hyperref[prop:lie]{Proposition A}.

The proof of \hyperref[prop:schur-positive]{Proposition C} uses the kernel
$\det q(ZW^*)$ on a matrix chart of $\operatorname{Gr}(m,2m)$.
It extends to a Hermitian algebraic kernel on a power of the Pl\"ucker
line bundle. The Catlin--D'Angelo theorem
then makes all sufficiently high powers sums of Hermitian squares.
Writing $K$ for this kernel, this means a decomposition
\[
 K(x,\overline y)^n=\sum_\nu s_\nu(x)\overline{s_\nu(y)}
\]
with holomorphic sections $s_\nu$. The precise setting and hypotheses
are recalled in Section~\ref{sec:CD}; the Grassmannian and its Pl\"ucker
line bundle are described in Section~\ref{sec:binomial-kernels}.
Restriction to graph planes of unitary matrices gives the claimed
character positivity, as explained in Section~\ref{sec:unitary-restriction}.

\medskip

The construction is
qualitative; we do not obtain a useful bound for the dimensions of the
amalgamating representations. However, effective versions of the geometric
positivity theorem are available \cite{TTeff}.

The passage to arbitrary compact Hausdorff groups is a special case of
G.~M. Bergman's reduction to compact Lie groups \cite[Section~1]{Bergman}.
It uses Peter--Weyl: finite-dimensional unitary representations separate
points, and finitely many suffice to detect the circle faithfully.
Applying the Lie-group result to suitable image groups and taking a
product proves the circle case of \hyperref[thm:main]{Theorem B}.

Joseph Israel \cite[Proposition~I.1.2 and \S\S I.1.3--I.1.4, pp.~3--4]{Israel}
proved that amalgamation over $\SU(2)$ and over
$\mathrm{PSU}(2)\cong\SO(3)$ reduces to amalgamation over the circle.
More generally, for a compact connected Lie group, the amalgamation-base
property follows from that of a closed subgroup containing a maximal
torus. Combining this reduction with the circle case completes the proof
of \hyperref[thm:main]{Theorem B}. We give the deduction in
Section~\ref{sec:su2}.

The connection between amalgamation and convolution unimodality already
appears in J.~S. Israel's thesis \cite{Israel}. His public abstract describes
partial amalgamation and completability results based on the eventual
strong unimodality theorem of A.~M. Odlyzko and L.~B. Richmond \cite{OR}
and a higher-difference property for symmetric Laurent polynomials.
Odlyzko--Richmond also observed that their method yields
stronger variation-diminishing properties of high convolutions
\cite[Section~1, discussion following Theorem~2]{OR}.
M.~Joswig, M.~Kummer, C.~H. Yun, and the author formulate the problem as
faithful Schur-positive character matching
\cite[Questions~2.3 and~5.1]{JKTY}. For circle embeddings in $\SU(3)$,
they establish rational symmetric lifts and common Laurent polynomials
\cite[Theorem~4.4 and Corollaries~4.6--4.8]{JKTY}, and ask for Schur
positivity of a specified family \cite[Conjecture~4.9]{JKTY}.
The common polynomial \eqref{eq:intro-common} used here is different
from that family.

\medskip

The paper is organized as follows. Section~\ref{sec:positivity} proves \hyperref[prop:schur-positive]{Proposition C}
from the Catlin--D'Angelo theorem. Section~\ref{sec:characters} applies
it to the Laurent polynomials associated with circle weights and checks
faithfulness. Section~\ref{sec:construction} matches the circle
restrictions and proves the circle case of \hyperref[prop:lie]{Proposition A}.
This then also passes to arbitrary compact Hausdorff groups,
proving the circle case of \hyperref[thm:main]{Theorem B}.
Section~\ref{sec:su2} recalls J.~S. Israel's reduction to deduce the
$\SU(2)$ and $\SO(3)$ cases and discusses open problems.

\medskip

\noindent\textbf{AI declaration.} The results and proofs presented here were generated almost entirely by
OpenAI's Codex. I formulated the research question and guided the
investigation through suggestions, critical discussion, and requests for
explanations and revisions. Codex also searched the literature and drafted
the manuscript. I reviewed its responses and worked through the arguments
in dialogue with the system. As a result, the initial 13-page draft was
considerably improved and made easier to understand.
Conventions for assigning credit to research
produced in this way have yet to be established.

\section{Hermitian positivity and unitary characters}\label{sec:positivity}

\subsection{The Catlin--D'Angelo theorem}\label{sec:CD}

Let $L$ be a holomorphic line bundle over a compact complex manifold $X$.
A \emph{Hermitian algebraic kernel on $L$} is a finite Hermitian linear
combination of products of holomorphic sections:
\[
 R(x,\overline y)=\sum_{\alpha,\beta}
 C_{\alpha\beta}s_\alpha(x)\overline{s_\beta(y)},
 \qquad C_{\alpha\beta}=\overline{C_{\beta\alpha}}.
\]
Here $s_\alpha\in H^0(X,L)$, and evaluations may be read in local
holomorphic trivializations. Positivity on the diagonal means
$R(x,\overline x)>0$ in such a trivialization for every $x$.
This defines a Hermitian metric on the dual bundle $L^*$.
The inequality
\begin{equation}\label{eq:strict-cs}
 |R(x,\overline y)|^2
 < R(x,\overline x)R(y,\overline y)\qquad(x\ne y)
\end{equation}
is independent of the trivializations. The same is true of the
$(1,1)$-form $\mathrm i\partial\bar\partial\log R(x,\overline x)$;
its strict positivity means that the induced metric on $L^*$ has
strictly negative curvature.

\begin{theorem}[Catlin--D'Angelo]\label{thm:CD}
Let $R$ be a Hermitian algebraic kernel on a holomorphic line bundle $L$
over a compact complex manifold $X$. Suppose that $R$ is positive on
the diagonal, satisfies \eqref{eq:strict-cs}, and
\[
 \mathrm i\partial\bar\partial\log R(x,\overline x)>0.
\]
Then for every sufficiently large integer $n$ there are finitely many
sections $s_\nu\in H^0(X,L^n)$ such that
\begin{equation}\label{eq:CD-squares}
 R(x,\overline y)^n=\sum_\nu s_\nu(x)\overline{s_\nu(y)}.
\end{equation}
\end{theorem}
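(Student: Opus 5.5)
The approach is to recast \eqref{eq:CD-squares} as positivity of a finite Hermitian matrix. That matrix is then realised as the matrix of a self-adjoint integral operator on $H^0(X,L^n)$, and positivity of the operator is proved for large $n$ by semiclassical (Bergman/Berezin) asymptotics. This is the strategy of Catlin and D'Angelo; throughout, $R(x,\overline y)$ is used both as the kernel and (on the diagonal) as the weight.

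\emph{Step 1: reduction to matrix positivity.} Put $V_n=H^0(X,L^n)$, which is finite-dimensional since $X$ is compact. Expanding $R^n$ gives a finite Hermitian expression
\[
 R(x,\overline y)^n=\sum_{A,B}C^{(n)}_{AB}\,e_A(x)\overline{e_B(y)},\qquad C^{(n)}=(C^{(n)})^*,
\]
for any basis $(e_A)$ of $V_n$. If $C^{(n)}\ge0$, diagonalising it as $C^{(n)}=\sum_\nu v_\nu v_\nu^*$ and setting $s_\nu=\sum_A (v_\nu)_A e_A$ yields \eqref{eq:CD-squares}. So it suffices to prove $C^{(n)}\ge0$ for all large $n$.

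\emph{Step 2: a metric and an integral operator.} Define a Hermitian metric $h$ on $L$ by $|s|_h^2(x)=|s(x)|^2/R(x,\overline x)$. This is trivialisation-independent, and its curvature is $\mathrm i\partial\bar\partial\log R(x,\overline x)>0$, so $(L,h)$ is positive. Fix a volume form $dV$ and take $(e_A)$ orthonormal in $L^2(X,L^n,h^n\,dV)$. Consider the operator on $V_n$
\[
 (T_nf)(x)=\int_X R(x,\overline y)^n\,\frac{f(y)}{R(y,\overline y)^n}\,dV(y).
\]
The integrand is a section of $L^n$ in $x$ and a well-defined density in $y$. By orthonormality, $T_n e_B=\sum_A C^{(n)}_{AB}e_A$, so $C^{(n)}\ge0$ exactly when $T_n$ is positive semidefinite. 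The associated quadratic form is
\[
 \langle T_nf,f\rangle=\iint \frac{\overline{f(x)}R(x,\overline y)^nf(y)}{R(x,\overline x)^nR(y,\overline y)^n}\,dV(x)\,dV(y).
\]

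\emph{Step 3: localisation and asymptotics; this is the main obstacle.} Set $\rho(x,y)=|R(x,\overline y)|^2/(R(x,\overline x)R(y,\overline y))$.
\begin{itemize}[label={}]
\item By \eqref{eq:strict-cs} and compactness, $\rho\le1-\delta$ outside any fixed neighbourhood of the diagonal. So the kernel $R(x,\overline y)^n/(R(x,\overline x)R(y,\overline y))^{n/2}$, taken in $h$-unitary frames, is $O((1-\delta)^{n/2})$ there.
\item Near the diagonal, positive curvature gives $-\log\rho(x,y)\approx|x-y|^2_{\omega}$ with $\omega=\mathrm i\partial\bar\partial\log R$. The kernel is therefore a Gaussian of width $n^{-1/2}$, of the same shape as the Bergman kernel $B_n$ of $(V_n,h^n)$.
\item Using the Tian--Yau--Zelditch--Catlin expansion $B_n(x,x)\sim c\,n^{\dim X}(\omega^{\dim X}/dV)(1+O(1/n))$ and off-diagonal Gaussian decay of $B_n$, one shows that $n^{\dim X}T_n=\Pi_n M_\sigma\Pi_n+O(1/n)$ in operator norm on $V_n$. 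Here $\Pi_n$ is the Bergman projection and $M_\sigma$ is multiplication by a smooth symbol $\sigma$ that is strictly positive. Concretely, $\sigma$ is a constant times $dV/\omega^{\dim X}$, up to the normalisation coming from the Gaussian integral.
\item A Toeplitz operator with symbol bounded below by $c>0$ satisfies $\langle\Pi_nM_\sigma\Pi_nf,f\rangle\ge c\|f\|^2$. Hence $T_n>0$ for all large $n$.
\end{itemize}
The delicate point is the uniform control of the error term. The phase $\log R(x,\overline y)$ must be compared with the almost-analytic extension of $\log R(x,\overline x)$, which is exact here because $R$ is sesqui-holomorphic. One then needs an estimate on the operator norm of the off-diagonal remainder, for instance via Schur's test with the Gaussian bounds. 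This is where both the strict inequality \eqref{eq:strict-cs} and $\mathrm i\partial\bar\partial\log R>0$ are used.

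Once $T_n>0$ is established, Step 1 gives finitely many sections $s_\nu\in H^0(X,L^n)$ with $R^n=\sum_\nu s_\nu(x)\overline{s_\nu(y)}$ for all $n\ge n_0$.
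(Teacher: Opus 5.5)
Your proposal is correct in outline, but it takes a different route from the paper. The paper gives no proof of Theorem~\ref{thm:CD}. It specializes \cite[Theorem~1]{CD} to trivial $E$ (equivalently \cite[Theorem~2]{TT} with $E$ trivial and $P=1$), noting that the hypotheses are exactly the strong global Cauchy--Schwarz condition there.

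You instead reconstruct an analytic proof. Your Steps~1 and~2 are exact. With $h=|\cdot|^2/R(x,\overline x)$ and an $L^2(h^n\,dV)$-orthonormal basis of $H^0(X,L^n)$, the coefficient matrix of $R^n$ is the matrix of $T_n$. So the sum-of-squares property is equivalent to $T_n\ge0$.

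Step~3 is the standard Berezin--Toeplitz mechanism, and it works. One input should be made explicit. The on-diagonal TYZ expansion plus Gaussian decay of $B_n$ does not by itself give the operator comparison. You need the near-diagonal expansion
$B_n(x,\overline y)=R(x,\overline y)^n\bigl(n^{\dim X}b_0(x,\overline y)+O(n^{\dim X-1})\bigr)$.
Here the phase is the exact polarization $\log R(x,\overline y)$ of $\log R(x,\overline x)$; you allude to this, but it is the real input. With it, Schur's test gives $n^{\dim X}T_n=\Pi_nM_\sigma\Pi_n+O(1/n)$, where $\sigma=1/b_0(x,\overline x)$ is a positive multiple of $dV/\omega^{\dim X}$. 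Meanwhile \eqref{eq:strict-cs} and compactness make the off-diagonal part $O((1-\delta)^{n/2})$.

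The two routes buy different things. The citation is short and carries more generality: a twisting bundle $E$, and a maximal sum of squares. Your route shows where each hypothesis enters: strict Cauchy--Schwarz kills the off-diagonal region, and positive curvature produces the Gaussian localization and a positive Toeplitz symbol. It also yields $T_n>0$ directly, hence a maximal sum of squares, and with quantitative remainders it points toward effective thresholds as in \cite{TTeff}. Its cost is that it rests on near-diagonal Bergman kernel asymptotics and Toeplitz calculus, which are substantial theorems in their own right. So it is not a more elementary argument than the one the paper invokes.
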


This follows from \cite[Theorem~1, p.~44]{CD}, with the auxiliary bundle
$E$ trivial and with their negative line bundle equal to $L^*$.
Equivalently, it is the special case $E$ trivial and $P=1$ of
\cite[Theorem~2, p.~7]{TT}.
The two strict hypotheses are the strong global Cauchy--Schwarz
condition of \cite[p.~5]{TT}. The cited theorem gives a maximal sum of
squares; the weaker conclusion \eqref{eq:CD-squares} is sufficient here.
In particular, $R^n$ is a positive semidefinite kernel in any choices
of fibers or trivializations. Quillen's earlier theorem \cite{Quillen}
makes a bihomogeneous Hermitian polynomial that is strictly positive away
from the origin a sum of Hermitian squares after multiplication by a
sufficiently high power of the standard squared norm. The geometric
theorem above allows us to take powers of the particular kernel $R$.

\subsection{Binomial kernels on the Grassmannian}\label{sec:binomial-kernels}

For a complex matrix $M$ and $p\ge1$, its Schatten $p$-norm is
\[
 \|M\|_p=\bigl(\tr((MM^*)^{p/2})\bigr)^{1/p}.
\]
The instance of H\"older's inequality for Schatten norms that we need
\cite[Corollary~IV.2.6]{Bhatia} says that, for square matrices
$M_1,\ldots,M_{2k}$ of the same size,
\[
 |\tr(M_1\cdots M_{2k})|\le\prod_{j=1}^{2k}\|M_j\|_{2k}.
\]

For an $m$ by $m$ matrix $Z$, put
\[
 \mathcal E(Z)=\bigoplus_{r=0}^m\bigwedge^r Z.
\]
This construction respects products and adjoints, and
$\tr\mathcal E(Z)=\det(I+Z)$. Applying the inequality to $2k$ factors
alternating between $\mathcal E(Z)$ and $\mathcal E(W)^*$ gives
\begin{align}
 |\det(I+(ZW^*)^k)|
 &=\bigl|\tr\bigl[(\mathcal E(Z)\mathcal E(W)^*)^k\bigr]\bigr|\notag\\
 &\le\|\mathcal E(Z)\|_{2k}^{k}\|\mathcal E(W)\|_{2k}^{k}\notag\\
 &=\sqrt{\det(I+(ZZ^*)^k)\det(I+(WW^*)^k)}.
 \label{eq:matrix-cs}
\end{align}
Let $X=\operatorname{Gr}(m,2m)$ be the Grassmannian of $m$-dimensional
complex subspaces of $\C^{2m}$. The Pl\"ucker embedding sends a subspace
with basis $v_1,\ldots,v_m$ to the projective point
$[v_1\wedge\cdots\wedge v_m]$ in
$\mathbb P(\bigwedge^m\C^{2m})$. Its Pl\"ucker line bundle
$\mathcal O_X(1)$ is the pullback of the hyperplane bundle under this
embedding. Represent planes by full-row-rank frames
$V=(A\ B)$ and $V'=(C\ D)$, with $m$ by $m$ blocks. For $k\ge1$, set
\begin{equation}\label{eq:grassmann-kernel}
 K_k(V,\overline{V'})
 =\prod_{\alpha^k=(-1)^{k+1}}\det(AC^*+\alpha BD^*).
\end{equation}
The identity
$\prod_{\alpha^k=(-1)^{k+1}}(1+\alpha t)=1+t^k$ gives, on the graph
chart $V=(I\ Z)$, $V'=(I\ W)$,
\begin{equation}\label{eq:graph-kernel}
 K_k(Z,\overline W)=\det(I+(ZW^*)^k).
\end{equation}
The Pl\"ucker coordinates of a frame are its maximal minors.
By Cauchy--Binet, each determinant in \eqref{eq:grassmann-kernel} is a
sesquilinear form in these coordinates. Frame changes multiply
their product by $(\det E)^k\overline{(\det F)^k}$, and the set of
$\alpha$ is closed under conjugation. Thus $K_k$ is a Hermitian
algebraic kernel on $\mathcal O_X(k)$.

These kernels have positive diagonal globally. Indeed, normalize $V$
so that $P=AA^*$ and $Q=BB^*$ satisfy $P+Q=I$. Then $P,Q$ commute and
\begin{equation}\label{eq:global-diagonal}
 K_k(V,\overline V)=\det(P^k+Q^k)>0,
\end{equation}
since each eigenvalue of $P^k+Q^k$ has the form $t^k+(1-t)^k>0$
with $0\le t\le1$. Inequality~\eqref{eq:matrix-cs} proves weak global
Cauchy--Schwarz on the dense graph chart, and continuity extends it
to every pair of planes.

Weak Cauchy--Schwarz also gives semipositive curvature; see
J.~P. D'Angelo and D.~Varolin \cite[Section~5, pp.~228--229]{DV}.
For completeness, in a local
holomorphic trivialization, fix $x_0$ and observe that
\[
 \log K_k(z,\overline z)-\log|K_k(z,\overline{x_0})|^2
       +\log K_k(x_0,\overline{x_0})
\]
has a minimum of zero at $x_0$. The polarized factor is nonzero near
$x_0$, so the middle term is pluriharmonic. Consequently
$\mathrm i\partial\bar\partial\log K_k(z,\overline z)\ge0$.

\subsection{Strict positivity and restriction to the unitary group}\label{sec:unitary-restriction}

A function $\chi$ on $\U(m)$ is of \emph{positive type} if, for every
finite list $U_1,\ldots,U_r\in\U(m)$, the matrix
$[\chi(U_iU_j^*)]_{i,j=1}^r$ is positive semidefinite.

\begin{proof}[Proof of Proposition C]
The kernel $K_1(V,\overline{V'})=\det(V(V')^*)$ is the standard
Pl\"ucker kernel. Its Cauchy--Schwarz inequality is strict for distinct
planes, since their Pl\"ucker vectors are not proportional, and its
curvature form is the positive Fubini--Study form restricted along
the Pl\"ucker embedding.

Write $q=\prod_j(1+z^{k_j})$, first omitting factors with $k_j=0$,
and put $K=\prod_jK_{k_j}$. This is a Hermitian algebraic kernel on
$\mathcal O_X(\sum_j k_j)$, positive on the diagonal. All its factors
satisfy weak Cauchy--Schwarz and have semipositive curvature, and at
least one factor is $K_1$. Thus $K$ satisfies both strict hypotheses
of Theorem~\ref{thm:CD}. For every sufficiently large $n$, the kernel
$K^n$ is a sum of Hermitian squares.

Restrict this kernel to graph planes of unitary matrices. In the graph
trivialization it has the form
\[
 K(U,\overline V)^n=\det q(UV^*)^n\qquad(U,V\in\U(m)).
\]
The sum-of-squares expression \eqref{eq:CD-squares} makes every matrix
$[K(U_i,\overline{U_j})^n]_{i,j}$ a sum of positive semidefinite
matrices of rank at most one. Consequently $\chi_n(U)=\det q(U)^n$ is a central
function of positive type on $\U(m)$. Its restriction to the maximal torus is
the symmetric polynomial $\prod_i q(x_i)^n$. The Schur polynomials
$s_\lambda$, indexed by partitions with at most $m$ parts, form a
basis of symmetric polynomials and are the irreducible polynomial
characters of $\U(m)$; see \cite{FH}. Hence
\[
 \chi_n=\sum_\lambda c_\lambda\chi_\lambda
\]
is a finite irreducible-character expansion. Fourier positivity for a
central function of positive type gives $c_\lambda\ge0$
\cite[Sections~3.3 and~5.3]{Folland}: its Fourier
transform in the irreducible representation with character
$\chi_\lambda$ is $(c_\lambda/\dim\chi_\lambda)I$ and is positive
semidefinite.

The symmetric polynomial $\prod_i q(x_i)^n$ has integer coefficients.
Since Schur polynomials form an integral basis of symmetric polynomials
over $\Z$ \cite[Chapter~I, \S3, (3.2), p.~41]{Macdonald}, every
$c_\lambda$ is an integer. Reinserting factors
with $k_j=0$ multiplies the character by a positive integer. This
proves the proposition.
\end{proof}

\section{Faithful characters from circle weights}\label{sec:characters}

We apply \hyperref[prop:schur-positive]{Proposition C} to the polynomials used in
the matching construction. The occurrence of weight $1$ supplies
the factor required by that proposition and also rules out a nontrivial
kernel. Tensoring a polynomial representation of $\U(m)$ with any
integral power of the determinant gives a genuine representation,
even when its character has negative exponents.

\begin{proposition}\label{prop:character}
Let $c=(c_1,\ldots,c_\ell)\in\Z^\ell$ contain the integer $1$, and let
$m\ge1$. Define
\begin{equation}\label{eq:fc}
 f_c(z)=\prod_{j=1}^{\ell}(1+z^{c_j})(1+z^{-c_j}),\qquad
 s_c=\sum_{j=1}^{\ell}|c_j|.
\end{equation}
For all sufficiently large integers $n$,
\begin{equation}\label{eq:Hcm}
 H_{c,m,n}(x_1,\ldots,x_m)=\prod_{i=1}^m f_c(x_i)^n
\end{equation}
is the character of a faithful representation
$\Phi_{c,m,n}:\U(m)\to\U(4^{nm\ell})$.
\end{proposition}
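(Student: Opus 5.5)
We reduce to Proposition~C exactly as sketched in the introduction. Put $q_c(z)=z^{s_c}f_c(z)=\prod_{j=1}^{\ell}(1+z^{|c_j|})^2$, using $z^{|c|}(1+z^{c})(1+z^{-c})=(1+z^{|c|})^2$ for every integer $c$. This is a product of $2\ell$ binomials $1+z^{k}$ with $k\in\Z_{\ge0}$, and one of them has $k=1$ because $c$ contains $1$. Proposition~C therefore gives $n_0$ such that, for $n\ge n_0$, $\prod_i q_c(x_i)^n$ is the character of a polynomial representation $\Psi_n$ of $\U(m)$. Set $\Phi_{c,m,n}=\Psi_n\otimes{\det}^{-ns_c}$. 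Its character is $\prod_i x_i^{-ns_c}q_c(x_i)^n=H_{c,m,n}$. The dimension is the value at the identity, $f_c(1)^{nm}=4^{\ell nm}$. After choosing an invariant inner product, we may regard $\Phi_{c,m,n}$ as a map into $\U(4^{nm\ell})$.

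The substantive part is faithfulness. Suppose $g\in\ker\Phi_{c,m,n}$. Since $\U(m)$ is connected and its elements are conjugate into the torus, and kernels are normal, we may conjugate $g$ to a diagonal matrix $\diag(x_1,\dots,x_m)$. This diagonal element acts trivially exactly when every torus weight of $\Phi_{c,m,n}$ evaluates to $1$ on it.

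We therefore need to understand the weights. The weights are the exponents occurring in $H_{c,m,n}$. Every coefficient of $f_c$ is a nonnegative integer, so no cancellation can occur in the product, and each monomial $\prod_i x_i^{e_i}$ with each $e_i$ an exponent of $f_c^n$ appears with positive coefficient. The exponents of $f_c$ include $0$, since $0$ occurs as the exponent $c_j-c_j$ in each factor. They also include $1$: take the term $z^{1}$ from the factor with $c_j=1$ and $z^{0}$ from all other factors. Hence $0$ and $1$ are both exponents of $f_c^n$.

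So for each $i$ the weight $e_i$ (the $i$-th unit vector) occurs, and $x_i=1$ for all $i$. Hence $g=I$, and $\Phi_{c,m,n}$ is faithful. The main obstacle is just this no-cancellation bookkeeping; everything else is a direct application of Proposition~C.

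Two minor points remain to verify. First, representations with the same character are equivalent, so $\Phi_{c,m,n}$ is well defined up to equivalence. Second, the threshold $n_0(q_c,m)$ depends only on $c$ and $m$, which matches "for all sufficiently large $n$".
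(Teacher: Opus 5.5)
Your proposal is correct. The construction of $\Phi_{c,m,n}$ is the same as the paper's: pass to $q_c=z^{s_c}f_c$, invoke Proposition~C, twist by $\det^{-ns_c}$, and read off the dimension $f_c(1)^{nm}=4^{nm\ell}$. The two proofs differ only in the faithfulness step.

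You argue combinatorially. Because $f_c$ has nonnegative Laurent coefficients and its support contains $0$ and $1$, no cancellation occurs in $\prod_i f_c(x_i)^n$. Hence each standard basis vector of $\Z^m$ is a torus weight of $\Phi_{c,m,n}$, and a diagonal kernel element must satisfy $x_i=1$ for all $i$.

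The paper argues analytically. On the unit circle $f_c(x)=\prod_j|1+x^{c_j}|^2$, so $\chi(u)=\prod_{i,j}|1+x_i^{c_j}|^{2n}\le 4^{nm\ell}$. A kernel element must attain equality, since its character equals the dimension. The factor with $c_j=1$ then forces $|1+x_i|=2$, i.e.\ $x_i=1$.

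What each route buys: the paper's exploits the squared-modulus form of $f_c$ and needs no weight bookkeeping. Yours uses only nonnegativity of the coefficients and the presence of the exponents $0$ and $1$. It also makes the structural reason visible: the weights contain a basis of the character lattice of the maximal torus, so the torus acts faithfully, and conjugating any unitary matrix into the torus finishes the argument.

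A minor point: you use $e_i$ both for exponents and for unit vectors, so rename one of them.
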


\begin{proof}
The ordinary polynomial
\begin{equation}\label{eq:qc}
 q_c(z):=z^{s_c}f_c(z)
         =\prod_{j=1}^{\ell}(1+z^{|c_j|})^2
\end{equation}
has nonnegative integer coefficients and a positive constant coefficient.
Since some $c_j=1$, this binomial product contains $1+z$ and
satisfies \hyperref[prop:schur-positive]{Proposition C}. Factors with $c_j=0$ are
simply the constant $4$.

By \hyperref[prop:schur-positive]{Proposition C}, for every sufficiently large $n$
there is a polynomial representation of $\U(m)$ with character
$\prod_i q_c(x_i)^n$. Tensoring with $\det^{-ns_c}$ gives
\eqref{eq:Hcm}. At the identity its character equals
$f_c(1)^{nm}=4^{nm\ell}$.

To check faithfulness, diagonalize an arbitrary $u\in\U(m)$, with
eigenvalues $x_1,\ldots,x_m\in\T$. We have
\begin{equation}\label{eq:faithful-bound}
 \chi_{\Phi_{c,m,n}}(u)
 =\prod_{i=1}^m\prod_{j=1}^{\ell}|1+x_i^{c_j}|^{2n}
 \le4^{nm\ell}.
\end{equation}
If $u$ lies in the kernel, its character equals its dimension, so equality
holds. Every factor in the product is nonnegative and at most its value
at $1$, so equality forces $|1+x_i^{c_j}|=2$ for every $i,j$. Taking a
$j$ with $c_j=1$ gives $x_i=1$ for every $i$. A unitary matrix with all
eigenvalues $1$ is the identity. Hence the representation is faithful.
\end{proof}

\begin{remark}
Faithfulness is proved on the entire group $\U(m)$. It is not inferred
merely from faithfulness of a restriction to one circle. This distinction
avoids any need to analyze possible central kernels after the representation
is pulled back to a compact Lie group.
\end{remark}

\section{Matching characters and compact amalgams}\label{sec:construction}

It remains to choose the initial representations and apply the preceding
construction twice, once for each weight list. The restriction theorem for
representations of closed subgroups supplies the weight-$1$ hypothesis of
Proposition~\ref{prop:character}.

\begin{lemma}\label{lem:weight-one}
Let $G$ be a compact Lie group and $\iota:\T\hookrightarrow G$ a continuous
embedding. There is a faithful finite-dimensional unitary representation
$R$ of $G$ whose restriction along $\iota$ contains circle weight $1$.
\end{lemma}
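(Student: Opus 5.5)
We take a faithful representation, add a second representation that realizes weight $1$ on the circle, and use their direct sum. Since $G$ is a compact Lie group, Peter--Weyl supplies a faithful finite-dimensional unitary representation $R_0:G\to\U(m_0)$. If we can find another finite-dimensional unitary representation $R_1$ of $G$ whose restriction along $\iota$ contains the weight $1$, then $R=R_0\oplus R_1$ works. It is faithful because $R_0$ already is, and its restricted weight list is the union of the two lists, so it contains $1$.

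To construct $R_1$, we use the restriction theorem for closed subgroups. Every irreducible representation of the closed subgroup $\iota(\T)\subset G$ occurs in the restriction of some irreducible representation of $G$. We apply this to the character $\iota(z)\mapsto z$ of $\iota(\T)\cong\T$; this character is well defined because $\iota$ is an embedding, hence an isomorphism onto its image.

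There are two ways to justify the restriction theorem, either of which suffices.

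\begin{itemize}
\item \emph{Frobenius reciprocity.} The induced representation $\operatorname{Ind}_{\iota(\T)}^G(z)$ is a nonzero subspace of $L^2(G)$, so it contains some irreducible $G$-representation $\rho$. By reciprocity, $\operatorname{Hom}_{\T}(z,\rho|_{\T})\ne0$.
\item \emph{Stone--Weierstrass.} Matrix coefficients of representations of $G$ are uniformly dense in $C(G)$, hence their restrictions are dense in $C(\iota(\T))$. If the weight $1$ never occurred in any restriction, every such restricted coefficient would be orthogonal in $L^2(\T)$ to $z$. By density $z$ would then be orthogonal to itself, which is a contradiction.
\end{itemize}

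Set $R_1=\rho$, realized unitarily by averaging an inner product over $G$.

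A more explicit alternative is available. Because $\iota$ is injective, the weights of $R_0\circ\iota$ have greatest common divisor $1$. Some integer combination $\sum n_i a_i=1$ can then be realized inside a suitable tensor product of copies of $R_0$ and its dual $\overline{R_0}$, which directly gives a representation containing weight $1$. The restriction-theorem argument avoids this bookkeeping, so I would present that version and mention this one only as a remark.

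The main point requiring care is the restriction theorem itself; everything else is a direct-sum formality. I expect no real obstacle, since this is a standard consequence of Peter--Weyl and Frobenius reciprocity for compact groups.
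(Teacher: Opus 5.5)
Your proposal is correct and is essentially the paper's proof: the paper also takes a faithful representation $R_0$ and forms $R_0\oplus S$, where $S$ is an irreducible representation whose restriction along $\iota$ contains $z\mapsto z$. The only difference is that the paper cites this restriction theorem from Br\"ocker--tom Dieck (Chapter~III, Theorem~(4.5)), whereas your Frobenius-reciprocity, Stone--Weierstrass, and gcd/tensor-power arguments each supply a proof of it.
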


\begin{proof}
Choose a faithful finite-dimensional unitary representation $R_0$ of $G$
\cite[Theorem~5.13]{Folland}.
By \cite[Chapter~III, Theorem~(4.5), p.~137]{BtD}, the circle character
$z\mapsto z$ occurs in the restriction of an irreducible representation
$S$ of $G$; hence $R_0\oplus S$ is faithful and contains weight $1$.
\end{proof}

\begin{proof}[Proof of Proposition A for $H=\T$]
By Lemma~\ref{lem:weight-one}, we may choose faithful representations
\[
 R_A:A\to\U(m),\qquad R_B:B\to\U(\ell)
\]
whose respective restricted weight lists
$a=(a_1,\ldots,a_m)$ and $b=(b_1,\ldots,b_\ell)$ both contain $1$.
These representations and dimensions are now fixed.

Apply Proposition~\ref{prop:character} to the list $b$ with ambient rank
$m$, and to the list $a$ with ambient rank $\ell$. Choose a common integer
$n$ beyond both thresholds. The resulting faithful representations
\[
 \Phi_{b,m,n}:\U(m)\to\U(d),\qquad
 \Phi_{a,\ell,n}:\U(\ell)\to\U(d),\qquad
 d=4^{nm\ell},
\]
have the same dimension. Put
\[
 \pi=\Phi_{b,m,n}\circ R_A,\qquad
 \tau=\Phi_{a,\ell,n}\circ R_B.
\]
They are faithful as compositions of faithful representations. Their
circle characters are
\begin{align*}
 \chi_{\pi\circ\iota_A}(z)
 &=\prod_{i=1}^m f_b(z^{a_i})^n
 =\prod_{i=1}^m\prod_{j=1}^{\ell}
       (1+z^{a_i b_j})^n(1+z^{-a_i b_j})^n\\
 &=\prod_{j=1}^{\ell}f_a(z^{b_j})^n
 =\chi_{\tau\circ\iota_B}(z).
\end{align*}
Finite-dimensional unitary representations of $\T$ split as direct sums
of the characters $z\mapsto z^q$, $q\in\Z$. Equality of their characters
therefore means equality of all weight multiplicities. Choosing orthonormal
bases in corresponding weight spaces gives a single unitary conjugation
that identifies the two restricted homomorphisms. After that conjugation,
$\pi$ and $\tau$ are the embeddings required for the amalgam. A continuous
injective map from a compact space to a Hausdorff space is a topological
embedding, so these are embeddings in the category of compact groups.
\end{proof}

G.~M. Bergman proved that a compact Lie group is a compact amalgamation
base if and only if it is an amalgamation base among compact Lie groups
\cite[Section~1, statements~(2) and~(3); see also pp.~276--278]{Bergman}.
The reduction is also recalled in \cite[Section~2]{JKTY}.
Applying this equivalence to the circle case of
\hyperref[prop:lie]{Proposition A} proves
\hyperref[thm:main]{Theorem B} for $S^1$.

\section{Amalgamation over \texorpdfstring{$\SU(2)$ and $\SO(3)$ and further questions}{SU(2) and SO(3)}}\label{sec:su2}

J.~S. Israel explicitly gives the reductions used here
\cite[Proposition~I.1.2 and \S\S I.1.3--I.1.4, pp.~3--4]{Israel}.
The underlying fact that two representations of a compact connected
Lie group are equivalent when their restrictions to a maximal torus are
equivalent also appears in G.~M. Bergman's discussion of relative inner
amalgamation \cite[Section~11, p.~279]{Bergman}. We repeat the proof for convenience:

\begin{proof}[Proof of Theorem B for $\SU(2)$ and $\SO(3)$]
Let $A,B$ be compact groups with specified embeddings of
$H\in\{\SU(2),\SO(3)\}$, and let $T$ be a maximal torus of $H$.
In both cases $T\cong\T$. Restrict the two specified embeddings
of $H$ into $A,B$ to $T$. The circle case proved in
Section~\ref{sec:construction} gives a compact amalgam $K$, with embeddings
$j_A,j_B$. Choose a family
$(\rho_\alpha:K\to\U(d_\alpha))_\alpha$ separating points of $K$
by Peter--Weyl.

For each $\alpha$, the two $H$-representations obtained by composing
with $\rho_\alpha j_A$ and $\rho_\alpha j_B$ agree on $T$.
Every element of $H$ is conjugate into $T$, so their characters agree
on $H$. They are therefore unitarily equivalent. Choose a unitary intertwiner
$u_\alpha$. The maps
\[
 a\longmapsto(\rho_\alpha(j_A(a)))_\alpha,\qquad
 b\longmapsto(u_\alpha\rho_\alpha(j_B(b))u_\alpha^{-1})_\alpha
\]
agree on $H$ and are injective, since coordinatewise conjugation does
not change kernels. They are continuous, and compactness makes them
embeddings into the compact group $\prod_\alpha\U(d_\alpha)$.

If $A,B$ are compact Lie groups, the circle case of
\hyperref[prop:lie]{Proposition A} provides $K=\U(D)$. Using its defining representation alone in the same
argument gives a single unitary conjugation and a unitary amalgam.
This also proves \hyperref[prop:lie]{Proposition A} for
$H\in\{\SU(2),\SO(3)\}$.
\end{proof}

\hyperref[thm:main]{Theorem B} settles the rank-one special unitary
case. The higher-rank part of G.~M. Bergman's question remains open
\cite[Question~20]{Bergman}; see also \cite[Section~1]{JKTY}:
\begin{quote}
For each $n\ge3$, is $\SU(n)$ a compact amalgamation base?
\end{quote}
The maximal torus of $\SU(n)$ has dimension $n-1$, and equality of
characters on a single circle no longer determines a representation.
For example, the standard representation of $\SU(n)$ and its dual are
inequivalent when $n\ge3$, but both have character
\[
 z+z^{-1}+n-2
\]
on the circle $\diag(z,z^{-1},1,\ldots,1)$. Thus the deduction for
$\SU(2)$ does not extend by simply choosing a circle in a higher-rank
group. The question is to match characters on the entire common subgroup;
the present argument neither establishes this nor gives a counterexample
for $\SU(n)$ with $n\ge3$.

\section*{Acknowledgments}

I thank George Bergman for many helpful comments and remarks that  improved the exposition of the results.

\end{document}